\documentclass[11pt]{article}

\usepackage[top=1.0in, bottom=1.0in, left=1.0in, right=1.0in]{geometry}
\usepackage{amsfonts, amssymb, amsthm, amsmath}
\usepackage{booktabs, threeparttable, multirow}
\usepackage{graphicx}
\usepackage{xcolor}
\usepackage{titlesec} 
\usepackage{caption}[2007/12/23]
\usepackage{afterpage}
\usepackage{algorithm}
\usepackage{algpseudocode}
\usepackage{setspace} 
\usepackage{authblk} 


\usepackage{todonotes}
\usepackage{appendix}
\usepackage{enumerate}
\usepackage{enumitem}
\usepackage{tabularx}
\usepackage{comment}
\usepackage{url}
\usepackage{adjustbox}

\makeatletter
\newcommand{\multiline}[1]{%
  \begin{tabularx}{\dimexpr\linewidth-\ALG@thistlm}[t]{@{}X@{}}
    #1
  \end{tabularx}
}
\makeatother

\newcommand{\algorithmicelsif}{\textbf{else if}}
\algdef{C}[IF]{IF}{NoThenElseIf}[1]{\algorithmicelsif\ #1}

\usepackage{hyperref}

\hypersetup{
    colorlinks=true,
    linkcolor=blue,
    citecolor=blue,
    urlcolor=blue,
    pdfborder={0 0 0}
}

\usepackage[
    backend=biber,
    style=apa,
    uniquename=false
]{biblatex}

\DeclareLanguageMapping{english}{english-apa}
\AtEveryBibitem{\clearfield{url}}
\AtEveryBibitem{\clearfield{doi}}

\newtheorem{definition}{Definition}

\newtheorem{proposition}{Proposition}

\newtheorem{lemma}{Lemma}

\makeatletter
\def\BState{\State\hskip-\ALG@thistlm}
\makeatother

\definecolor{darkblue}{RGB}{0, 51, 102}

\title{
    \textbf{
        \LARGE
        \textcolor{darkblue}{
            Optimization of the Railcar Assignment Problem Using Zone-based Double Deep Reinforcement Learning
        }
    }
}

\author[a]{Ruonan Zhao}
\author[a]{Joseph Geunes\textsuperscript{*}}

\affil[a]{
    Wm Michael Barnes '64 Department of Industrial and Systems Engineering,
    Texas A\&M University
}

\date{}

\titleformat{\section}
  {\color{darkblue}\normalfont\Large\bfseries}
  {\thesection}{1em}{}

\titleformat{\subsection}
  {\color{darkblue}\normalfont\large\bfseries}
  {\thesubsection}{1em}{}

\titleformat{\subsubsection}
  {\color{darkblue}\normalfont\normalsize\bfseries}
  {\thesubsubsection}{1em}{}

\begin{document}
\maketitle

\vspace{10pt}
\begin{abstract}

Railcar switching, or shunting operations decisions play a significant role in the efficient operation of railyard systems, which are in turn critical to the fast and effective movement of goods. In flat yards, switching operations are primarily performed using locomotives to push and pull railcars in order to assemble and disassemble trains. In such settings, railcars with predefined destinations are located across multiple parallel rail tracks, and must be moved, or switched, in order  to form desired outbound trains. This study addresses the Railcar Assignment Problem (RAP) in flat yards with an objective of minimizing the total number of switching movements. We present a novel mixed-integer programming (MIP) model for this problem that incorporates practical operational constraints in rail yards, and demonstrate its NP-hardness. To solve large-scale instances, we propose a comprehensive Zone-based Double Deep Q-Network (Zone-DDQN) heuristic method that integrates railway structure, yard-zone decomposition, and a Double Deep Q-Network (DDQN). The yard-zone decomposition strategy partitions the yard into multiple parallel yard zones, after which the DDQN is applied to solve the problem within each zone individually and sequentially. Computational experiments across small-, medium-, and large-scale yards were conducted on a series of RAP instances. Average results show that the Zone-DDQN heuristic achieves an average optimality gap of \(5.71\%\) across small-scale yard instances. For large-scale yard instances containing more than 150 railcars and 30 tracks, the MIP model was not able to obtain solutions within 24 hours. In contrast, the Zone-DDQN heuristic was able to solve these instances with an average running time of \(214.42\) seconds.\\

\noindent \textbf{Keywords}: Switching Operations; Mixed-Integer Optimization; Deep Reinforcement Learning; Flat Yard Switching

\end{abstract} \hspace{10pt}

\noindent\rule{\textwidth}{0.4pt}

\vspace{5pt}

\noindent\textsuperscript{*}Corresponding author: \texttt{geunes@tamu.edu}.

\newpage
\section{Introduction}
Railway freight transportation is widely recognized as a more environmentally sustainable mode of transport compared to road and air, due to its lower greenhouse gas emissions. As a result, many countries have set strategic goals to expand the proportion of freight moved by rail transportation. The European Commission has set targets for shifting freight from road to rail, aiming to transfer 30\% of road freight traveling more than 300 km by 2030 and exceed 50\% by 2050 through the development of efficient freight corridors \parencite{european2011roadmap}. These trends indicate a global effort to expand railway freight systems. However, a major operational bottleneck in railway freight systems occurs in yards, where railcars are sorted, disassembled, and assembled to create trains. Freight railcars in North America are likely to have dwell times exceeding 20 to 30 hours in switching yards, and up to 37 hours during congested periods \parencite{BTS_2025_TSAR}. Better coordination of switching movements in yards is a key factor in reducing these dwell times and improving throughput and operational efficiency.  

Switching, or marshaling operations are critical in determining overall yard efficiency. These operations involve rearranging railcar(s) between tracks to form outbound trains. Rail yards are typically categorized into three types: flat yards, gravity yards, and hump yards. In flat yards, all switching operations are carried out solely by locomotives, unlike gravity yards and hump yards, where gravity assists in car movement, making the switching process more efficient. Within each yard, the track configuration can be either stub or through. In stub tracks, all switching movements occur from a single end, resulting in a stack structure with a last-in-first-out (LIFO) property. In contrast, a so-called through track permits entry and exit at either end, and its queue structure allows first-in-first-out (FIFO) access, providing increased operational flexibility.

The switching process can involve different objectives, such as forming outbound trains with a specified sequence \parencite{schduleinthemorning1999,trainmarshalling2000} or extracting specific railcars for maintenance \parencite{fyRetrieving2018}. These types of operations have been widely studied in the literature, with commonly used approaches including mixed-integer programming (MIP) models \parencite{LK2005}, dynamic programming (DP) \parencite{falsafain2019novel}, approximate dynamic programming (ADP) \parencite{switchingproject}, or heuristic approaches \parencite{haahr2016matheuristic}. In recent years, reinforcement learning (RL)–based methodologies have also been explored for such problems, where policies are learned through interaction with the environment for sequential decision-making tasks. For instance, \textcite{hiroshima2012reinforcement} proposed an RL method to minimize the total processing time to form one outbound train with the desired order. However, RL methods face scalability challenges because the state and action spaces increase rapidly with the number of tracks and railcars. In contrast, railway domain heuristics can exploit structural properties of the yard to produce solutions quickly, although they may be short-sighted.

In this paper, we study the RAP in flat yards with stub tracks, where the objective is to determine a sequence of switching moves that consolidates railcars with the same destination onto a single track while minimizing the total number of switching movements. To address this problem, we first develop an MIP model that considers the detailed operational constraints in the yards. Second, we construct a Zone-based Double Deep Q-Network (Zone-DDQN) algorithm that integrates railway-domain heuristics, zone decomposition, and deep reinforcement learning. 

Both methods presented in this paper can be extended to other similar sorting problems sharing the stack structure. For example, in water or ball sorting problems, units of different colors are placed in containers that require LIFO access, and the objective is to move units so that each container holds only a single color \parencite{ito2023sorting}. Another practical application is the container relocation problem in terminal yards. Containers are stacked in vertical columns, and only the top container in a stack can be accessed directly, leading to a LIFO structure \parencite{shin2025DQL_Container,tang2024DQL_Container}.

The contributions of this paper are listed as follows:
\begin{enumerate}[label=(\roman*)]
    \item We formulate the RAP in stub flat yards as a sequential railcar switching optimization problem and develop an MIP model with an objective function of minimizing the total number of switching movements.

    \item The NP-hardness for the RAP is established by showing that a restricted version of RAP is equivalent to the water sorting problem, which has been proven to be NP-complete \parencite{ito2023sorting}.

    \item We propose a comprehensive RL framework based on Double Deep Q-Networks (DDQN) for the RAP, and develop a Zone-DDQN algorithm that integrates railway domain merging heuristics, yard-zone decomposition, and DDQN.
  
    \item Computational experiments on 30 generated RAP instances demonstrate that the proposed Zone-DDQN framework can efficiently generate solutions with an average optimality gap of 5.71\%.
\end{enumerate}

This paper is organized as follows. The next section summarizes related literature. In Section~\ref{sec:Problemdef}, we present a formal definitions of the RAP. 
Section~\ref{sec:mip} presents our MIP model for the RAP, where detailed operational constraints are considered. Then, Section~\ref{sec:Zone-DDQN_alg} first establishes the NP-hardness of the RAP and subsequently presents the comprehensive Zone-DDQN framework. Section~\ref{sec:c_results} presents the numerical results. Lastly, Section~\ref{sec:conclusion} concludes the paper and suggests future directions for research on shunting operations.

\section{Literature Review}
\label{sec:literature}
Switching operations have been widely studied in the literature due to their direct impact on yard efficiency, dwell time, and resource allocation and utilization. This problem class involves complex combinatorial structures and sequential decision-making under operational constraints such as track capacity, movement feasibility, and locomotive movements. Different types of railway yards use different switching mechanisms. In hump or gravity yards, railcars are moved primarily by gravity, whereas flat yards rely on locomotives to perform switching operations. \textcite{boysen_shuntingreview} provide a comprehensive review of shunting yard operations, including single-stage and multi-stage sorting strategies. In multi-stage sorting, railcars may be reshunted, whereas in single-stage sorting, railcars can only be shunted once. 

A major part of the literature formulates related switching problems using classical operations research models. \textcite{zhang2018integer_programming} propose an integer programming model to assign railcars to classification tracks, with the objective of minimizing reswitching operations caused by undesired railcar sequences. \textcite{fyRetrieving2018} investigate a railcar retrieval problem for maintenance operations, where required numbers of railcars with distinct types need to be retrieved from the flat yard. They formulate this problem using an MIP model that minimizes retrieval cost, where the cost associated with each railcar depends on its position: railcars located at the switching end incur a lower retrieval cost, while those positioned further away incur a higher cost.
\textcite{switchingproject} consider the railcar shunting problem with a goal of relocating all railcars to their designated departure tracks. 
They formulate an MIP model that minimizes total shunting cost in a single-locomotive setting, while incorporating operational constraints such as the LIFO access of railcars in stub yards. \textcite{han_optimizing} propose a graph model, wherein nodes represent railcars and arcs represent actions. They then construct a binary integer programming model for the graph with a goal of maximizing the cumulative reward associated with the selected actions.

Another line of research focuses on the use of heuristic algorithms and simulation methods. \textcite{Kozachenko02012021} study the multiple-group train formation problem, where railcars need to be rearranged into a specified order using a single locomotive. The objective seeks to minimize the overall shunting time required. They formulate the problem on a directed graph and construct a shortest path algorithm to address it. \textcite{ZIEN2025heu} propose a rolling horizon heuristic to solve the railcar sorting problem in yards while considering restricted track availability and finite track capacities.
\textcite{Gaiayardsimu} provide a simulation model for evaluating flat yard operations. The yard is partitioned into multiple segments—such as arrival, classification, and departure tracks—to study both their individual and combined performance. 

In recent years, RL, particularly Q-learning has been applied to switching problems to learn decision policies through interaction with the environment. RL models learn by interacting with the environment, allowing them to adapt their decisions and improve performance over time \parencite{sutton1998reinforcement}. \textcite{hiroshima2012reinforcement} studies the problem of rearranging railcars, initially distributed across multiple tracks, into a specified order on a single main track to form one outbound train. They construct a Q-learning approach, with an objective that minimizes  total processing time. \textcite{peer2018shunting} represent the yard state as an image, and solve the Train Unit Shunting Problem using deep reinforcement learning. However, in their problem formulation, temporarily relocating a train from one track to another is not permitted. \textcite{2ndproject} propose a hybrid heuristic-RL method to minimize the total shunting cost associated with locomotive moves, allowing the movement of individual railcars or consecutive railcars between tracks on both stub and through tracks.
 
The study most closely related to our work is the work of \textcite{hirashima2011reinforcementloco}, who proposes an RL method to minimize the total distance traveled by a locomotive while moving all railcars, initially distributed across multiple subtracks, into one main track to form one outbound train. They formulate the problem as a Markov Decision Process (MDP) and apply Q-learning to obtain switching policies. In their setting, railcar movements are restricted to transfers from a subtrack to the main track or from one subtrack to another subtrack. Additionally, their computational experiments are limited to relatively small yard instances consisting of only 12 tracks, each with a capacity of 6 railcars, and a total of 36 railcars in the yard. Traditional Q-learning methods suffer from scalability limitations when the state-action space becomes large \parencite{mnih2015human}, since they require maintaining and updating Q-values for each state-action pair. The issue is particularly challenging in large railway yards with numerous tracks, railcars, and feasible switching actions. To address this challenge, we propose a novel Zone-DDQN framework to integrate railway-domain heuristics, zone decomposition, and double deep reinforcement learning to solve the RAP. The strength of the Zone-DDQN framework lies in its flexibility and adaptability: (i) it allows flexible railcar movements between any pair of tracks; (ii) it does not impose a fixed number of outbound trains, but instead dynamically forms one or multiple outbound trains according to yard configurations; and (iii) its stack-based structure can also be extended to related applications such as the container relocation problem \parencite{jiang2021new} and water sorting puzzle \parencite{ito2023sorting}.

\section{RAP Problem Definition}
\label{sec:Problemdef}
This section presents a formal description of the RAP. We begin by introducing the terminology used throughout the problem. A stub yard contains multiple parallel tracks that are connected via ladder tracks on the one side, enabling locomotives to transfer railcars across tracks (see Figure \ref{rapyard} for an example layout). Each track can store a sequence of railcars subject to its capacity limit, and railcars can only enter or leave a track from the switch end connected to the ladder track. The opposite side of a track is referred to as the dead end, where railcars cannot enter or leave. A source track is defined as the track from which railcar(s) are moved, while a receiver track is defined as the track to which railcar(s) are moved. One switching move is defined as moving an individual railcar or a set of consecutive railcars from a source track to a receiver track. The objective of the RAP is to determine a sequence of feasible switching moves that rearranges railcars into outbound trains, where each outbound train consists of railcars sharing the same destination, while minimizing the total switching movements.

The example yard layout shown in Figure~\ref{rapyard}(a) consists of six tracks and two ladder tracks. Railcars are represented as colored rectangles, where each color indicates a distinct destination. A locomotive is represented as a black rectangle. Initially, railcars of different destinations are distributed randomly throughout the yard. Figure~\ref{rapyard}$(b)$ shows one feasible final configuration corresponding to the initial yard layout presented in Figure~\ref{rapyard}$(a)$, where each track contains railcars sharing the same destination. Note that the railcar sequence within each track in the final layout is not required to follow any specific order; that is, any ordering of railcars with the same destination (color) within a track is acceptable.

\begin{figure}[h]
\centering
\includegraphics[width=0.88\textwidth]{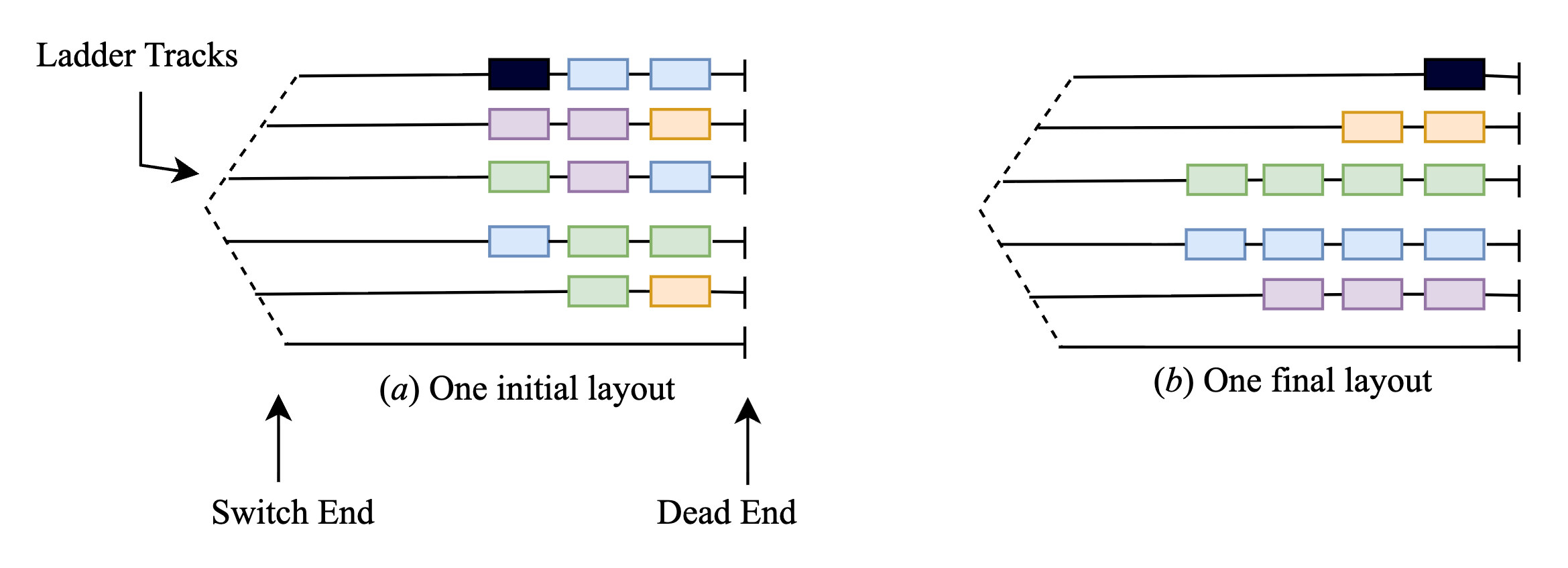}
\caption{Railyard configuration example}
\label{rapyard}
\end{figure}

According to the context described above, the RAP is defined as follows.
\begin{definition}[\textbf{Railcar Assignment Problem (RAP)}]
We are given:
(i) a set of tracks $K$, each with common capacity $H\in\mathbb{Z}_{\ge 1}$,
(ii) a set of railcars $R$,
(iii) a set of destinations $D$,
(iv) an initial layout specifying, for each track, the ordered list of cars from dead end to switch end, where each car $r \in R$ has a destination $d(r) \in D$.

We seek a sequence of switching moves such that all cars on an track have the same color. In each move, we choose a source track and a receiver track, remove a contiguous set of cars located in the source track via the switch end (i.e., the leftmost \(m\) cars for some \(m\ge 1\)), and place these cars into the receiver track, preserving the ordering of the moved cars. The receiver track must have sufficient remaining capacity to accommodate each move. The objective is to minimize the total number of switching moves required to reach a terminal state in which all railcars sharing the same destination are occupy a single track.
\end{definition}

\section{RAP MIP Model}
\label{sec:mip}
As previously discussed, the RAP aims to minimize the total number of switching moves required to rearrange railcars with the same destination into one track. To achieve this objective, this section presents an MIP model for the RAP. The model assumptions are as follows: (i) there is one locomotive in the system; (ii) at most one switching move may occur per time period; (iii) railcars on each track occupy consecutive slots starting from the dead end, with no empty slots between adjacent railcars; (iv) the total number of railcars for each destination is less than or equal to the track capacity; (v) the total number of destination labels is less than or equal to the number of tracks. Slots on each track are indexed from 1 to the track capacity, starting from the dead end and increasing toward the switch end. The slot information for a track with capacity 6 containing 3 railcars is illustrated in Figure~\ref{slot}. Note that the white blocks in slots 4, 5, 6 represent empty slots.

\begin{figure}[h]
\centering
\includegraphics[width=0.45\textwidth]{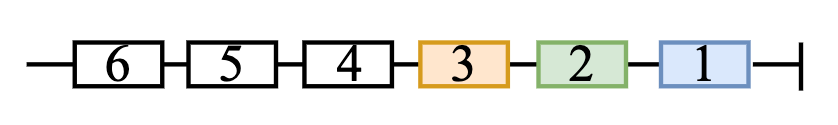}
\caption{Track slot example}
\label{slot}
\end{figure}

Our modeling approach divides time into a set $T$ of discrete time periods, where each switching move requires one time period. The required sets and input parameters are described in Table \ref{tab:notation}. The MIP decision variables for the RAP are classified into several categories based on their functions within the model. The model’s decision variables are provided in Table \ref{tab:RAP_MIP_variables}.

\begin{table}[htbp]
\centering
\caption{Definition for sets and parameters.}
\label{tab:notation}
\begin{tabular}{p{3cm} p{11cm}}
\toprule
\textbf{Notation} & \textbf{Definition} \\
\midrule

\multicolumn{2}{l}{\textbf{Sets}} \\

$K$ 
& Set of tracks: $i,j \in K$. \\

$L$ 
& Set of slots: $l \in L$. \\

$D$ 
& Set of railcar destinations: $d \in D$. \\

$T$ 
& Set of the decision time periods: $t \in T$, where $T=\{0,1,\dots,T_{\max}\}$ \\
\midrule
\multicolumn{2}{l}{\textbf{Input parameters}} \\

$H$
& Track capacity \\
\bottomrule
\end{tabular}
\end{table}

\begin{table}[htbp]
\centering
\caption{MIP decision variables.}
\label{tab:RAP_MIP_variables}
\begin{tabular}{p{4.5cm} p{2cm} p{8.5cm}}
\toprule
\textbf{Decision Variable} & \textbf{Type} & \textbf{Description} \\
\midrule

\multicolumn{3}{l}{\textbf{State variables}} \\

$y_{kl t}$ 
& Binary
& if slot $l$ on track $k$ is occupied at the end of period $t$, then $y_{kl t}=1$, otherwise $y_{kl t}=0$. \\

$v_{kl dt}$ 
& Binary
& if slot $l$ on track $k$ holds a railcar with destination $d$ at the end of period $t$, then $v_{kl dt}=1$, otherwise $v_{kl dt}=0$. \\

$h_{kt}\in\{0,1,\dots,H\}$ 
& Integer
& Number of railcars on track $k$ at the end of period $t$. \\

\midrule
\multicolumn{3}{l}{\textbf{Move variables}} \\

$x_{ijt}$ 
& Binary
& if the locomotive executes one switching move from track $i$ to track $j$ in period $t$, then $x_{ijt}=1$, otherwise $x_{ijt}=0$. \\

$a_{ijl t}$ 
& Binary
& if railcar at slot $l$ is moved from track $i$ to track $j$ in period $t$, then $a_{ijlt}=1$, otherwise $a_{ijl t}=0$.  \\

$b_{ijl t}$ 
& Binary
& if slot $l$ of track $j$ becomes occupied by a move from track $i$ in period $t$, then $b_{ijl t}=1$, otherwise $b_{ijl t}=0$.  \\

$m_{ijt}\in\{0,1,\dots,H\}$ 
& Integer
& Number of railcars moved from track $i$ to track $j$ in period $t$. \\

$\lambda_{ijqt}$ 
& Binary
& Index shift $q\in\Delta$ selected for move $(i\rightarrow j)$ in period $t$. \\

\midrule
\multicolumn{3}{l}{\textbf{Destination assignment variables}} \\

$s_{ijl dt}$ 
& Binary
& if railcar with destination $d$ at position $l$ on track $i$ is transferred to track $j$ in period $t$, then $s_{ijl dt}=1$, otherwise $s_{ijl dt}=0$. \\

$r_{ijl dt}$ 
& Binary
& if railcar with destination $d$ is placed at slot $l$ on track $j$ in period $t$ after being transferred from track $i$, then $r_{ijl dt}=1$, otherwise $r_{ijl dt}=0$. \\

\midrule
\multicolumn{3}{l}{\textbf{Completion-time variables}} \\

$w_t$ 
& Binary
& if the system reaches a valid terminal state by time period $t$, then $w_t=1$, otherwise $w_t=0$. \\

$u_t$ 
& Binary
& if period $t$ is the earliest period a valid terminal state is reached, then $u_t=1$, otherwise $u_t=0$. \\

\midrule
\multicolumn{3}{l}{\textbf{Terminal destination selectors}} \\

$\zeta_{kd}$ 
& Binary
& if destination $d$ is selected for track $k$ in the terminal state, then $\zeta_{kd}=1$, otherwise $\zeta_{kd}=0$. \\

\bottomrule
\end{tabular}
\end{table}

Based on the above definitions, the proposed MIP model for RSP is as follows.\\

1) Objective function
\begin{equation}
\min \ \sum_{t\in T}\ \sum_{\substack{i,j\in K\\ i\neq j}} x_{ijt}
\label{eq:objfun}
\end{equation}

The objective function minimizes the overall number of switching moves.

\noindent 2) State definition constraints
\begin{align}
&\sum_{d\in D} v_{kl dt} = y_{kl t} && \forall k\in K,l\in L,t\in T, \label{eq:state1}\\
&y_{k,l-1,t}\ \ge\ y_{kl t} && \forall k\in K,\ l=2,\dots,H,\ \forall t\in T, \label{eq:nogap}\\
&h_{kt}=\sum_{l=1}^{H} y_{kl t} && \forall k\in K,t\in T. \label{eq:height}
\end{align}

Constraint~\eqref{eq:state1} ensures that each position on a track is either empty or occupied by exactly one railcar destination type in every time period. Constraint~\eqref{eq:nogap} ensures that railcars on each track occupy consecutive positions starting from the dead end. In particular, if position $l$ is occupied at time $t$, then position $l-1$ must also be occupied at time $t$, thereby enforcing a contiguous stack structure. Constraint~\eqref{eq:height} defines the track ``height'' variable $h_{kt}$ as the total number of occupied positions on track $k$ at time $t$.\\ 

\noindent 3) Move selection constraints
\begin{align}
&\sum_{\substack{i,j\in K\\ i\neq j}} x_{ijt} \le 1-w_t && \forall t \in T \setminus \{T_{\max}\}, \label{eq:onemove}\\
&x_{ijt} \le y_{i1t} && \forall i,j\in K, i\neq j,\ t \in T \setminus \{T_{\max}\}, \label{eq:srcnonempty}\\
&a_{ijl t} \le x_{ijt} && \forall i,j\in K, i\neq j,\ l\in L,\ t \in T \setminus \{T_{\max}\}, \label{eq:activatea}\\
& b_{ijl t} \le x_{ijt} && \forall i,j\in K, i\neq j,\ l\in L,\ t \in T \setminus \{T_{\max}\}, \label{eq:activateb}\\
&m_{ijt} = \sum_{l=1}^{H} a_{ijl t} && \forall i,j\in K, i\neq j,\ t \in T \setminus \{T_{\max}\}. \label{eq:mdef}\\
& m_{ijt}\ge x_{ijt}
&& \forall i,j\in K, i\neq j,\ t \in T \setminus \{T_{\max}\}. \label{eq:m_x_relation}
\end{align}

Constraint~\eqref{eq:onemove} enforces that at most one switching move can be performed during each time period before the yard enters a terminal state. Specifically, if the yard system reaches terminal state at time $t$ (i.e., $w_t=1$), then the right-hand side becomes $0$ and no move is allowed in period $t$. If $w_t=0$, the constraint reduces to $\sum_{i\ne j} x_{ijt}\le 1$, allowing at most one move. Constraint~\eqref{eq:srcnonempty} prevents moves from an empty source track, i.e., a move from track $i$ can occur only if track $i$ is nonempty at time $t$ (slot $1$ is occupied by a car). Constraints~\eqref{eq:activatea} and ~\eqref{eq:activateb} ensure that a railcar cannot move from track $i$ to $j$ unless the locomotive moves from $i$ to $j$ in period $t$. Constraint~\eqref{eq:mdef} sets the value of $m_{ijt}$ by summing $a_{ijl t}$ over all slots $l=1,\dots,H$. Thus, $m_{ijt}$ indicates the overall number of railcars moved from track $i$ to $j$ in time period $t$. Constraint~\eqref{eq:m_x_relation} ensures that whenever a move is selected, at least one car must be moved.\\

\noindent 4) Source track selection constraints
\begin{align}
&a_{ijl t} \le y_{il t} && \forall i,j\in K,i\neq j,\ l\in L,\ t \in T \setminus \{T_{\max}\}, \label{eq:asrcfilled}\\
&a_{ij,l,t} \le a_{ij,l+1,t} + (1 - y_{i,l+1,t})
&& \forall i,j\in K,i\neq j,\ l=1,\dots,H-1,\ t \in T \setminus \{T_{\max}\}. \label{eq:acontig}
\end{align}

Constraint~\eqref{eq:asrcfilled} ensures that position  \(l\) on track $i$ can be selected for movement only if that position is currently occupied by a railcar. Constraint~\eqref{eq:acontig} enforces that the selected railcars form a consecutive group at the switch end of the source track, which is due to the stub track structure. Specifically, a railcar in slot \(l\) on track \(i\) cannot be selected for movement if the railcar in slot \(l+1\) on track \(i\) is occupied but not selected for movement.

\noindent 5) Receiver track selection constraints
\begin{align}
&b_{ijl t} \le 1 - y_{jlt} && \forall i,j\in K,i\neq j,\ l\in L,\ t \in T \setminus \{T_{\max}\}, \label{eq:emptydest}\\
&b_{ijl t} \le b_{ij,l-1,t} + y_{j,l-1,t}
&& \forall i,j\in K,i\neq j,\ l=2,\dots,H,\ t \in T \setminus \{T_{\max}\}, \label{eq:Contiguousfill}\\
&\sum_{l=1}^{H} b_{ijlt} = m_{ijt}
&& \forall i,j\in K,i\neq j,\ t \in T \setminus \{T_{\max}\}. \label{eq:numbermatch}
\end{align}

Constraint~\eqref{eq:emptydest} requires that railcars can only be placed into the currently empty positions of receiver track $j$. Recall that railcars on each track are assumed to occupy consecutive slots beginning from the dead end. Therefore, a slot $l$ on track $j$ may be filled only if slot $l-1$ is either already occupied or is simultaneously filled during the same move. Constraint~\eqref{eq:Contiguousfill} ensures that railcars added to receiving track $j$ occupy consecutive slots without gaps. Constraint~\eqref{eq:numbermatch} matches the number of filled slots on the receiving track to the number of railcars moved.\\ 

\noindent 6) State transition constraints
\begin{align}
&y_{kl,\,t+1} \ =\ y_{kl t}
\ + \sum_{\substack{i\in K\\ i\neq k}} b_{iklt}
\ - \sum_{\substack{j\in K\\ j\neq k}} a_{kjlt}
&& \forall k \in K,l\in L,\ t \in T \setminus \{T_{\max}\}. \label{eq:ytrans}
\end{align}
\begin{align}
&v_{kl d,\,t+1} \ =\ v_{kl dt}
\ - \sum_{\substack{j\in K\\ j\neq k}} s_{kjl dt}
\ + \sum_{\substack{i\in K\\ i\neq k}} r_{ikl dt}
&& \forall k\in K,l\in L,d\in D,\ t \in T \setminus \{T_{\max}\}. \label{eq:vtrans}
\end{align}

From time $t$ to $t+1$, constraint~\eqref{eq:ytrans} updates whether slot $l$ on track $k$ is occupied. Constraint~\eqref{eq:vtrans} updates the destination associated with slot $l$ on track $k$ from time $t$ to $t+1$. The term $v_{kl dt}$ represents the current destination assignment at time $t$. The term $\sum_{j\neq k} s_{kjl dt}$ removes the assignment if the railcar occupying slot $l$ on track $k$ is transferred from track $k$ to another track during period $t$. The term $\sum_{i\neq k} r_{ikl dt}$ assigns a destination to slot $l$ on track $k$ when a railcar transferred from another track is placed into that slot during period $t$. Therefore, this constraint maintains the correct destination assignment after each switching move.\\

\noindent 7) Destination assignment constraints
\begin{align}
&s_{ijl dt} \le a_{ijl t}
&& \forall i,j\in K,i\neq j,\ l\in L,d\in D,\ t \in T \setminus \{T_{\max}\}, \label{eq:slink1_a}\\
&s_{ijl dt} \le v_{il dt}
&& \forall i,j\in K, i\neq j,\ l\in L,d\in D,\ t \in T \setminus \{T_{\max}\}, \label{eq:slink1_v}\\
&\sum_{d\in D} s_{ijl dt} = a_{ijl t}
&& \forall i,j\in K, i\neq j,\ l\in L,\ t \in T \setminus \{T_{\max}\}, \label{eq:slink2}\\
&r_{ijl dt} \le b_{ijl t}
&& \forall i,j\in K,i\neq j,\ l\in L,d\in D,\ t \in T \setminus \{T_{\max}\}, \label{eq:rlink1}\\
&\sum_{d\in D} r_{ijl dt} = b_{ijl t}
&& \forall i,j\in K,i\neq j,\ l\in L,\ t \in T \setminus \{T_{\max}\}. \label{eq:rlink2}
\end{align}

Constraints~\eqref{eq:slink1_a}-\eqref{eq:slink2} define the destination associated with each railcar removed from track $i$ and transferred to track $j$. Since multiple railcars with different destinations may be moved simultaneously, the formulation tracks the destination assignment at each slot individually. If slot $l$ on track $i$ is selected to be moved from track $i$ to $j$ at time $t$ (i.e., $a_{ijlt}=1$), then exactly one destination $d$ must be associated with the move (by requiring $s_{ijl dt}=1$ for exaclty one destination). Constraints~\eqref{eq:slink1_a} and \eqref{eq:slink1_v} ensure that a railcar with destination $d$ at slot $l$ on track $i$ can be transferred to track $j$ in period $t$ only if there is a move from track $i$ to $j$ at time $t$, and the car in slot has destination $d$. Constraint~\eqref{eq:slink2} ensures that each moved railcar is associated with exactly one destination. Constraint~\eqref{eq:rlink1} ensures that a destination assignment variable $r_{ijl dt}$ cannot be equal to $1$ unless slot $l$ on track $j$ is selected to receive a railcar from track $i$ in time $t$. Constraint~\eqref{eq:rlink2} ensures that each slot $l$ on track $j$ receiving a railcar is associated with exactly one destination.\\

\noindent 8) Slot update constraints

The slot index change for any railcar that is moved in period $t$ is characterized by Lemma \ref{lem:slotChange}. 

\begin{lemma}\label{lem:slotChange}
Given $h_{j,t}$ and $h_{it}$ railcars on tracks $i$ and $j$ at the end of period $t$, respectively, if $m_{ijt}$ railcars move from track $i$ to track $j$ in period $t$, then the resulting slot index changes for each railcar moved are given by:
\begin{eqnarray}
q &= h_{jt} - h_{it} + m_{ijt},& 
\forall\, j,i \in K,j\neq i, t \in T. \label{eq:30}
\end{eqnarray}
The value of $q$ belongs to the set $\Delta=\{-(H-1),\dots,H-1\}$.
\end{lemma}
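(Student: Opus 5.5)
The plan is to track one moved railcar directly, using the LIFO structure enforced by constraints \eqref{eq:nogap}, \eqref{eq:acontig} and \eqref{eq:Contiguousfill}. At the end of period $t$, track $i$ occupies slots $1,\dots,h_{it}$ and track $j$ occupies slots $1,\dots,h_{jt}$. By \eqref{eq:asrcfilled} and \eqref{eq:acontig}, the $m_{ijt}$ cars selected on track $i$ are exactly those in slots $h_{it}-m_{ijt}+1,\dots,h_{it}$, the top block at the switch end. By \eqref{eq:emptydest}, \eqref{eq:Contiguousfill} and \eqref{eq:numbermatch}, the receiving slots on track $j$ are exactly $h_{jt}+1,\dots,h_{jt}+m_{ijt}$. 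For \eqref{eq:Contiguousfill} this needs a short induction: slot $h_{jt}+1$ may be filled because slot $h_{jt}$ is occupied, and each higher filled slot needs its predecessor filled, so the $m_{ijt}$ filled slots form a contiguous block starting at $h_{jt}+1$.

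Next I fix the correspondence between source and receiver slots. The definition of the RAP requires that the moved cars keep their order, with the block placed as a unit. The car at source slot $h_{it}-m_{ijt}+p$ therefore lands at receiver slot $h_{jt}+p$, for $p=1,\dots,m_{ijt}$. Its index change is
\[
(h_{jt}+p)-(h_{it}-m_{ijt}+p)=h_{jt}-h_{it}+m_{ijt},
\]
which does not depend on $p$. This gives \eqref{eq:30}, with one common shift $q$ for every car in the move. This common shift is what makes a single selector $\lambda_{ijqt}$ per move sufficient.

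For the range of $q$, I would use the bounds $1\le m_{ijt}\le h_{it}$ and $h_{jt}+m_{ijt}\le H$, which come from capacity and \eqref{eq:numbermatch}.
\begin{itemize}
\item Upper bound: $q=(h_{jt}+m_{ijt})-h_{it}\le H-h_{it}\le H-1$, since $h_{it}\ge m_{ijt}\ge 1$.
\item Lower bound: $q=h_{jt}-(h_{it}-m_{ijt})\ge 0-(H-1)$, since $h_{jt}\ge 0$ and $h_{it}-m_{ijt}\le H-1$ (because $h_{it}\le H$ and $m_{ijt}\ge1$).
\end{itemize}
So $q\in\Delta$.

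The main obstacle is that the MIP constraints listed so far do not fix the one-to-one pairing of individual source and receiver slots. They only fix the two blocks and that the blocks have equal size. I would therefore state explicitly that the pairing is order-preserving, as the problem definition requires. The lemma then works as the justification for the slot-update linkage constraints that follow in the paper.
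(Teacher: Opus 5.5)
Your proposal is correct and follows essentially the same route as the paper. Both identify the moved cars as the switch-end block $h_{it}-m_{ijt}+1,\dots,h_{it}$, stack them order-preserved on top of track $j$, read off the constant shift $h_{jt}-h_{it}+m_{ijt}$, and bound it using $1\le m_{ijt}\le h_{it}$ and $h_{jt}+m_{ijt}\le H$. Your version is somewhat more careful in two ways. You derive the source and receiver blocks from the MIP constraints and prove the bounds on $q$ by explicit inequalities, where the paper only exhibits the extreme configurations. You also state openly that the slot-to-slot pairing comes from the order-preservation in the problem definition rather than from the constraints.
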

\begin{proof}
Please see Appendix \ref{sec:proofs_appen}.
\end{proof}

We use the binary variable $\lambda_{ijqt}$ to ensure a correct change in the slot index number associated with each railcar that is moved during any period using the following set of constraints.
\begin{eqnarray}
&\sum_{q\in \Delta} \lambda_{ijqt} = x_{ijt}
& \forall i,j\in K,\ i\neq j,\ t \in T \setminus \{T_{\max}\}, \label{eq:lambdachoose}\\
&-M(1-x_{ijt})
\le
h_{jt}-h_{it}+m_{ijt}
-\sum_{q\in\Delta} q\,\lambda_{ijqt}
& \forall i,j\in K,\ i\neq j,\ t \in T \setminus \{T_{\max}\}, \label{eq:qdef}\\
&h_{jt}-h_{it}+m_{ijt}
-\sum_{q\in\Delta} q\,\lambda_{ijqt}
\le
M(1-x_{ijt})
& \forall i,j\in K,\ i\neq j,\ t \in T \setminus \{T_{\max}\}, \label{eq:qdef2}\\
&r_{ij,l+q,dt}
\ge
s_{ijl dt} + \lambda_{ijqt} - 1
& \forall i,j\in K,\ i\neq j,\ q\in\Delta,\nonumber\\
&& \forall l:\ l+q\in L,\ \forall d \in D,\ t \in T \setminus \{T_{\max}\}, \label{eq:mapLB}\\
&r_{ij,l+q,dt}
\le
s_{ijl dt} + (1-\lambda_{ijqt})
& \forall i,j\in K,\ i\neq j,\ q\in\Delta,\nonumber\\
&& \forall l:\ l+q\in L,\ \forall d \in D,\ t \in T \setminus \{T_{\max}\}. \label{eq:mapUB}
\end{eqnarray}

Constraint~\eqref{eq:lambdachoose} selects exactly one slot change value $q\in\Delta$ when a move is executed ($x_{ijt}=1$), and selects none when no move is executed.
Constraint~\eqref{eq:qdef} forces the slot change value to equal the implied value $h_{jt}-h_{it}+m_{ijt}$ as shown in Lemma \ref{lem:slotChange}; when no move is executed (i.e., $x_{ijt}=0$), the big-$M$ terms relax the constraint ($M$ is a sufficiently large number that we can set equal to $2H$ without loss of optimality). Finally, Constraints~\eqref{eq:mapLB}--\eqref{eq:mapUB} connect the destination assignments of railcars removed from track $i$ and railcars placed onto track $j$ through the selected slot change value. Specifically, if $\lambda_{ijqt}=1$ and a railcar with destination $d$ is moved from slot $l$ on track $i$, then a railcar with the same destination $d$ must be placed into slot $l+q$ on track $j$ (i.e., $r_{ij,l+q,dt}=1$). Therefore, these constraints preserve the relative ordering and destination information of railcars during the transfer from track $i$ to track $j$.\\

\noindent 9) Terminal time constraints

\begin{align}
&\sum_{t\in T} u_t = 1, \label{eq:one_u}\\
&w_t = \sum_{\tau=0}^{t} u_\tau && \forall t\in T. \label{eq:w_from_u}
\end{align}

Constraint~\eqref{eq:one_u} guarantees that exactly one time period is identified as the earliest period in which the system reaches a terminal state. Constraint~\eqref{eq:w_from_u} ensures that $w_t=1$  for all periods from the first time the system reaches a terminal state through the end of time horizon.\\

\noindent 10) Terminal condition constraints

\begin{align}
&\sum_{d\in D} \zeta_{kd} - y_{k1t} \le 1-u_t
&& \forall k,\ \forall t\in T, \label{eq:zeta1A}\\
&y_{k1t} - \sum_{d\in D} \zeta_{kd} \le 1-u_t
&& \forall k,\ \forall t\in T, \label{eq:zeta1B}\\
&v_{kl d t} \le \zeta_{kd} + (1-y_{kl t}) + (1-u_t)
&& \forall k,l,d,\ \forall t\in T. \label{eq:zeta2_time}
\end{align}

A terminal state is reached when all railcars with the same destination are consolidated together on a single track, and no further switching operations are required. Constraints~\eqref{eq:zeta1A}--\eqref{eq:zeta1B} force \(\sum_{d\in D}\zeta_{kd}=y_{k1t}\) when $u_t=1$. Therefore, when the system reaches a terminal state at time $t$, if track $k$ is nonempty  (i.e., $y_{k1t}=1$), exactly one destination can be selected for that track; otherwise, if the track is empty, no destination is selected. Constraint~\eqref{eq:zeta2_time} ensures that every occupied slot on track $k$ shares the same destination $d$ when reaching the terminal state. Specifically, if $u_t=1$ and slot $l$ on track $k$ is occupied at time $t$ (i.e., $y_{kl t}=1$), then the constraint reduces to \( v_{kl dt} \le \zeta_{kd}\), which implies that a railcar with destination $d$ cannot occupy slot $l$ on track $k$ unless destination $d$ is selected for track $k$. Therefore, every occupied slot on a nonempty track must have the same destination.\\

\noindent 11) Track destination assignment constraints
\begin{align}
&\sum_{k\in K} \zeta_{kd} - 1 \le (1-u_t)\,|K|
&& \forall d,\ \forall t\in T, \label{eq:onetrackA}\\
&1 - \sum_{k\in K} \zeta_{kd} \le (1-u_t)\,|K|
&& \forall d,\ \forall t\in T. \label{eq:onetrackB}
\end{align}

When the system reaches a terminal state, exactly one outbound train is formed for each destination $d\in D$, which is achieved by \eqref{eq:onetrackA}--\eqref{eq:onetrackB}. Specifically, when $u_t=1$, these contraints force $\sum_{k\in K}\zeta_{kd}=1$, thereby ensuring that each railcar destination is assigned to exactly one track.\\

\noindent 12) Consecutive move constraints

\begin{align}
& \sum\limits_{i\in K}\sum\limits_{\overset{j \in K}{j\ne i}} x_{ij1} \geq \sum\limits_{i\in K}\sum\limits_{\overset{j \in K}{j\ne i}} x_{ij2} \geq \cdots \geq \sum\limits_{i\in K}\sum\limits_{\overset{j \in K}{j\ne i}} x_{ijT}
\label{eq:move_no_stop}
\end{align}

Constraint \eqref{eq:move_no_stop} enforces that 
$x_{ijt}$ is nonincreasing over time, that is, the set of the moves begin in time period 1 and continue without interruption until all railcars have reached their destinations.

\subsection{Time Horizon Determination}
\label{sec:mip_time}
In this subsection, we propose a heuristic method to determine $T_{\max}$ in our RAP MIP model. While $T_{\max}$ must be large enough to guarantee that an optimal solution exists within the time horizon, an excessively large horizon introduces unnecessary variables and computing time. To balance this tradeoff, we construct a polynomial-time heuristic to generate a feasible solution for the RAP. Considering the assumption that at most one switching move may occur per time
period, we then set $T_{\max}$ equal to the total number of moves required by the heuristic to reach a terminal state. Thus, our MIP model determines the minimum number of switching moves within at most $T_{\max}$ time periods.

The proposed heuristic consists of five sequential steps designed to efficiently construct a feasible solution for the RAP. Before describing each step, we present the following definitions. A departure track for destination $d\in D$ is a track designated to contain all railcars with destination $d$ in the terminal state. Let $h_k$ denote the number of railcars on track $k$, for all $k \in K$. A partial track is a track whose number of railcars is greater than 0 but less than the track capacity $H$, i.e., $0 < h_k < H$. A group consists of consecutive railcars with the same destination, and the smallest group size consists of a single railcar. A switch-end group is the group closest to the switch end of the track. Using the illustration in Figure~\ref{slot}, the yellow railcar group is an example of a switch-end group. Recall that a source track is defined as the track from which railcar(s) are moved, while a receiver track is defined as the track to which railcar(s) are moved. 

The main idea of this heuristic is to assign departure tracks for railcars, distinguish departure tracks from non-departure tracks and gradually switch railcars to their designated departure track to reach a terminal state. Under the RAP definition, the terminal state requires that all railcars sharing the same destination are consolidated onto a
single track. However, the flexibility of assigning any track to any destination significantly increases the complexity of the problem. To address this challenge, the heuristic fixes the departure track for destinations $d \in D$. In Step 1, we identify tracks that contain railcars sharing the same destination. For each such track, if the corresponding destination does not yet have a designated departure track, the track is assigned as the departure track for that destination. Thus, the railcars already located on this track can be considered as having already reached their departure track, and therefore do not require additional switching moves, which helps reduce the problem size.

In Step 2, we consolidate railcars by moving them onto non-departure partial tracks in order to increase the number of empty tracks, thereby preparing the yard configuration with more available empty receiver tracks. This facilitates the switching operations performed in Steps 3 and 4. At each iteration of this step, the non-departure partial track with the fewest railcars is selected as the source track, $s$, while the non-departure partial track with the largest number of railcars is selected as the receiver track, $r$. Then, letting $h_s$ and $h_r$ denote the number of cars on tracks $s$ and $r$, respectively, we move $\min(H-h_r, \, h_s)$ railcars from track $s$ to track $r$, such that the capacity constraint of the receiver track is satisfied and the receiver track becomes as full as possible. This process continues until at most one non-departure partial track remains.

Step 3 switches all railcars from the remaining non-departure partial track by iteratively evaluating the switch-end group on this track. If the destination of the switch-end group already has an assigned departure track, the group is directly moved to that departure track. Otherwise, an empty track is selected, the switch-end group is moved to the empty track, and the empty track is assigned as the departure track for that destination.

Step 4 further checks whether there exist any remaining destinations that still do not have designated departure tracks. For each such destination, we randomly selects an empty track and assigns it as the designated departure track. Thus, after this step, each destination $d \in D$ has a designated departure track, which guides the subsequent switching operations in Step 5, i.e., each railcar group can be directly switched to its corresponding destination track.

In Step 5, we iteratively move each switch-end group to the corresponding departure track. The process continues until all non-departure tracks become empty. Since each departure track contains only railcars with its corresponding destination, the terminal state is reached.

Under the assumption that at most one switching move may occur during each time period, the total number of switching moves generated throughout the process is recorded as $Y$, which is then used to estimate $T_{\max}$. The resulting value of $T_{\max}$ provides an upper 
bound for the number of periods required by the MIP model.

\begin{algorithm}[t]
\caption{Heuristic for Determining $T_{\max}$}
\label{alg:Tmax_heuristic}
\begin{algorithmic}[1]

\State Initialize switch move counter $Y \gets 0$, departure track set $\mathcal{P}\gets \emptyset$
\State \textbf{Step 1: Initialize departure tracks}
\For{each track $k\in K$}
    \If{track $k$ is nonempty and all railcars on the track share the same destination $d$}
        \If{destination $d$ has no departure track}
            \State Assign track $k$ as the departure track for destination $d$
            \State Update $\mathcal{P}$
        \EndIf
    \EndIf
\EndFor

\Statex
\State \textbf{Step 2: Consolidate non-departure partial tracks}
\While{there exist at least two non-departure partial tracks}
    \State Select the non-departure partial track with the fewest railcars as source track $s$, where the number of cars on this track is denoted by $h_s$
    \State Select the non-departure partial track with the largest number of railcars as receiver track $r$, where the number of cars on this track is denoted by $h_r$
    \State Move $\min(H-h_r, \, h_s)$ railcars from $s$ to $r$.
    \State $Y \gets Y+1$
\EndWhile

\Statex
\State \textbf{Step 3: Clear railcars on partial tracks}
\While{there exists a non-departure partial track $s$}
    \State Let $d$ be the destination of the switch-end group on $s$
    \If{destination $d$ has the departure track}
        \State Move the switch-end group from $s$ to the departure track of $d$
    \Else
        \State Move the switch-end group from $s$ to an empty track $e$
        \State Assign $e$ as the departure track for destination $d$, update $\mathcal{P}$
    \EndIf
    \State $Y \gets Y+1$
\EndWhile

\Statex
\State \textbf{Step 4: Initialize remaining departure tracks}
\While{there exists a destination $d$ with no departure track}
    \State Randomly assign an empty track as the departure track for destination $d$, update $\mathcal{P}$
\EndWhile

\Statex
\State \textbf{Step 5: Route remaining cars to departure tracks}
\While{there exists a non-departure track containing railcars}
    \State Select a non-departure source track $s$
    \State Let $d$ be the destination of the switch-end group on $s$
    \State Move the switch-end group from $s$ to the departure track of $d$
    \State $Y \gets Y+1$
\EndWhile

\Statex
\State Set $T_{\max}\gets Y$
\State \Return $T_{\max}$
\end{algorithmic}
\end{algorithm}

\section{Zone-DDQN Algorithm}
\label{sec:Zone-DDQN_alg}
In the previous section, we constructed an MIP model for the RAP. However, as the problem size increases, finding an optimal solution within an acceptable computational time becomes challenging. Furthermore, if we restrict each switching move to only one individual railcar and require that the receiving track is either empty or has the same switch-end railcar type as the moved railcar, the the RAP becomes equivalent to the water sorting problem studied by \textcite{ito2023sorting}, which has been proven to be NP-complete. Since the RAP generalizes the water sorting problem by allowing more flexible switching operations, the RAP is NP-hard. Therefore, in this section, we present a Zone-DDQN algorithm that integrates railway-domain heuristics, zone decomposition, and double deep reinforcement learning to efficiently solve the RAP. 

The input of the Zone-DDQN algorithm is the initial yard layout, which includes track information, namely the set of tracks $K$ and track capacity $H$, as well as railcar information, including the set of railcars $R$, the set of available destinations \(D=\{1,2,\dots,|D|\}\), and for each railcar $r \in R$, its destination $d(r) \in D$. Furthermore, the slot position of each railcar is also provided. The output of the Zone-DDQN algorithm is a sequence of switching moves over time. We classify the yard scale into three types: small, medium, and large. The details of yard scale are provided in Section~\ref{sec:c_results}. Zone-DDQN contains three main processes: merging, yard-zone decomposition, and DDQN. We next describe each process in the following subsections, followed by a summary of the overall Zone-DDQN workflow for all problem instances.

\begin{figure}[!htbp]
\centering
\includegraphics[width=1.0\textwidth]{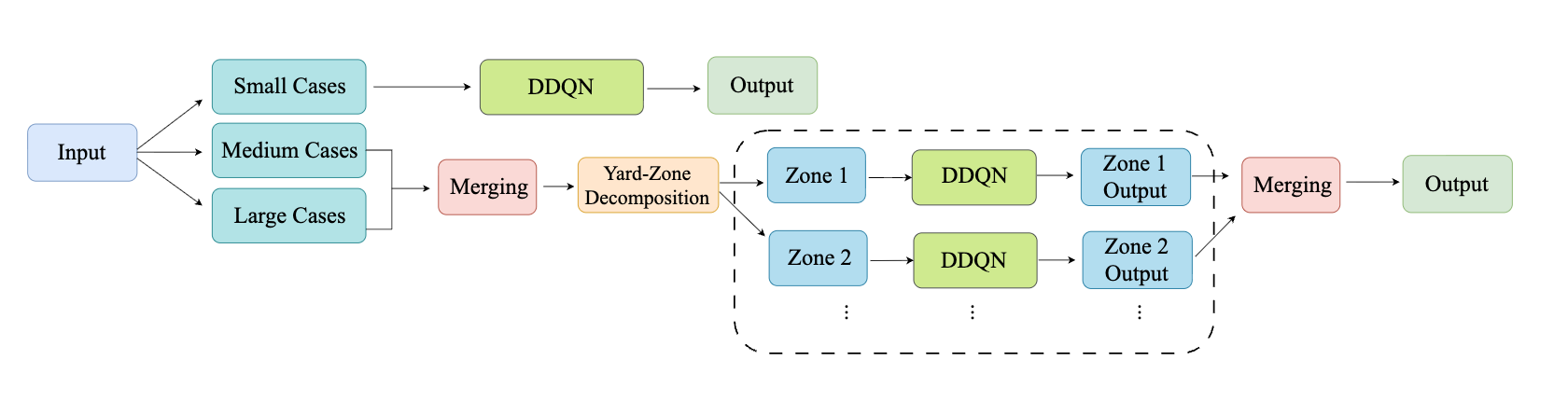}
\caption{Zone-DDQN algorithm}
\label{ZDDQN_algorithm}
\end{figure}

\subsection{Merging}
\label{sec:Merging}
Before describing the merging process, we present the following definitions. For each nonempty track $i \in K$, let \(g_i\) denote its switch-end group, \(l(g_i)\) represent the group length, i.e., the number of railcars contained in \(g_i\). For notation simplicity, we use \(d(g_i)\) to represent the destination of the switch-end group on track $i$. Define a \emph{D-pair} as a pair of distinct nonempty tracks $(i,j)$ such that their switch-end groups share the same destination, i.e., \(d(g_i) = d(g_j)\). The set of all D-pairs is represented by:
\[
\mathcal{D}=\{(i,j) \in K \times K: i \neq j,\ d(g_i) = d(g_j)\}.
\]
A switching move that transfers a switch-end group from one track of a D-pair to the other track in the same D-pair, while satisfying the track capacity constraint, is referred to as a \emph{merging} operation. This operation helps combine switch-end groups sharing the same destination, thereby reducing the overall number of railcar groups in the railyard system. Note that for an initial yard with $|D|$ destinations, the terminal state contains exactly $|D|$ groups, where each group consists of all railcars sharing the same destination.

For each merging operation, we need to determine the merging direction and the set of moved railcars. Specifically, the moved railcars correspond to the switch-end group on the source track. In Algorithm~\ref{alg:group_merging}, a merging direction $i \rightarrow j$ implies that track $i$ is the source track and track $j$ is the receiver track.

When multiple D-pairs are available for merging, we record all feasible merging information in a candidate set $\mathcal{C}$. Each candidate is represented by \[
(\mathrm{src}, \mathrm{rec}, d_{\mathrm{com}}, g_\mathrm{src}, l(g_{\mathrm{src}}), l(g_{\mathrm{rec}}), l(g_{\mathrm{src}})+l(g_{\mathrm{rec}})),
\] where $\mathrm{src}$ and $\mathrm{rec}$ denote the source and receiver tracks, respectively, $d_{\mathrm{com}}$ denotes the common destination label, $g_{\mathrm{src}}$ denotes the moved switch-end group, $l(g_{\mathrm{src}})$ denotes the moved group length, $l(g_{\mathrm{rec}})$ denotes the length of switch-end group on the receiver track, and $l(g_{\mathrm{src}})+l(g_{\mathrm{rec}})$ denotes the combined group size after merging.

For each D-pair, the merging direction is selected by moving the smaller switch-end group to the larger one, shown in Algorithm~\ref{alg:group_merging}, lines 6-9. Using this approach, groups with the same destination gradually become longer through successive merging operations. If the two switch-end groups have the same length, the default merging direction is from the higher-index track to the lower-index track. We then check whether the merging operation in the proposed direction satisfies the receiving track's capacity constraint. If the constraint is satisfied, we add the pair to the candidate set $\mathcal{C}$; otherwise, we evaluate the reverse merging direction, as shown in lines 13-17 of Algorithm~\ref{alg:group_merging}. After all D-pairs are processed, the candidates in $\mathcal{C}$ are sorted in ascending order according to the priority rule defined in Algorithm~\ref{alg:group_merging}. This priority rule is designed to encourage the formation of larger groups $l(g_{\mathrm{src}})+l(g_{\mathrm{rec}})$ as early as possible, which helps consolidate railcars belonging to the same destination into fewer and larger groups across the yard. This rule first prioritizes merging operations that produce the largest combined group size. If multiple candidates have the same combined group size, preference is given to the candidate with larger $l(g_{\mathrm{rec}})$, followed by the larger $l(g_{\mathrm{src}})$. Remaining ties are broken by selecting the candidate with the smaller receiver track index and then the smaller source track index. This rule mainly determines the preferred merging direction. In practice, it can be adjusted according to operational requirements in yards, such as prioritizing specific regions of the yard, balancing workload across tracks, or matching local switching preferences. After sorting, the first candidate is selected and the corresponding merging operation is executed. The procedure terminates when $\mathcal{C}$ becomes empty.

\begin{algorithm}[H]
\caption{Merging Operations}
\label{alg:group_merging}
\begin{algorithmic}[1]

\While{true}
    \State Initialize candidate set $\mathcal{C} \leftarrow \emptyset$

    \For{each D-pair $(i,j)$ with $i<j$}
        \State Let $g_i$ and $g_j$ denote the switch-end groups on tracks $i$ and $j$
        \State Let $l(g_i)$ and $l(g_j)$ denote the corresponding group lengths, respectively
        
        \If{$l(g_i) < l(g_j)$}
            \State Merging direction $i \rightarrow j$
        \ElsIf{$l(g_j) < l(g_i)$}
            \State Merging direction $j \rightarrow i$
        \Else
            \State Merging direction from higher-index track to lower-index track
        \EndIf

        \If{executing the merging operation in the proposed merging direction satisfies receiver track's capacity}
            \State Add merge candidate into $\mathcal{C}$
        \ElsIf{reverse direction satisfies track capacity}
            \State Add reverse merge candidate into $\mathcal{C}$
        \EndIf
    \EndFor

    \If{$\mathcal{C} = \emptyset$}
        \State \textbf{break}
    \EndIf

    \State Sort candidates in $\mathcal{C}$ in ascending order according to
    \[
    \left(
    -(l_{\mathrm{src}}+l_{\mathrm{rec}}),
    -l_{\mathrm{rec}},
    -l_{\mathrm{src}},
    \mathrm{rec},
    \mathrm{src}
    \right)
    \]

    \State Select and execute the first candidate in $\mathcal{C}$

\EndWhile

\end{algorithmic}
\end{algorithm}

\newpage
\subsection{Yard-Zone Decomposition}
\label{sec:zone_decompose}
After the merging procedure, the yard may contain several long switch-end groups. To reduce computational complexity, we decompose the large-scale yard into smaller sub-yards that can be processed independently. Each sub-yard is referred to as a zone, and the corresponding RAP within a zone is referred to as a RAP-subproblem. For a RAP-subproblem, a sub-terminal state is defined as the state in which all railcars within the zone sharing the same destination are consolidated to a single track.

Given a yard consisting of $|K|$ tracks and $|D|$ destination labels for the RAP, we decompose the yard into several consecutive parallel zones while keeping all railcars in their original positions. Each zone consists of a subset of consecutive tracks and is solved independently as a RAP-subproblem. By partitioning the yard into smaller zones, the RL agent described in Section~\ref{sec:DDQN_PHASE} for medium and large scale yards only needs to learn switching operations within each zone rather than across the entire yard, which significantly reduces the state and action space.

For a RAP-subproblem, if the number of destination labels within a zone exceeds the number of tracks in that zone, the corresponding sub-terminal state becomes infeasible, since at least one destination would be unable to obtain an exclusive track. 

\begin{proposition}
\label{prop:rap_sub_feasible}
If each zone contains at least $|D|$ tracks, then the corresponding RAP-subproblem is feasible.
\end{proposition}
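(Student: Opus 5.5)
The plan is to prove feasibility constructively: apply the five-step heuristic of Algorithm~\ref{alg:Tmax_heuristic} to the tracks $K_z$ of a single zone and check that every step stays feasible and that the procedure ends in a sub-terminal state. Two counting facts drive the argument. First, let $D_z\subseteq D$ be the set of destinations that actually appear in the zone. Then $|D_z|\le |D|\le |K_z|$, so each destination present can be given its own track. Second, by model assumption (iv), the total number of railcars with any destination $d$ in the whole yard is at most $H$, so this also holds inside the zone. Hence any track designated as the departure track of $d$ can receive every car of $d$ in the zone without violating capacity. This second fact disposes of all capacity checks in Steps 3 and 5: a departure track only ever holds cars of its own destination, so it never overflows.

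The key steps, in order, are as follows.

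\emph{Step 1 (pure tracks).} Each track whose cars all share a destination that has no departure track yet becomes that destination's departure track. This uses at most one track per destination.

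\emph{Step 2 (consolidation).} Repeatedly move $\min(H-h_r,h_s)$ cars from the emptiest non-departure partial track $s$ to the fullest one $r$. Each move either empties $s$ or fills $r$. Either way the number of non-departure partial tracks strictly decreases, so Step 2 terminates with at most one such track remaining. No car leaves the zone and all capacity limits hold throughout.

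\emph{Steps 3 and 4 (assigning departure tracks).} Clear the remaining partial track group by group, opening a new departure track whenever a group's destination has none yet. Then give every still-unassigned destination an empty track.

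\emph{Step 5 (routing).} Move each switch-end group on a non-departure track to its departure track. Every move removes at least one car from the set of non-departure tracks, so the procedure terminates. When it stops, every nonempty track is a departure track holding a single destination, which is precisely the sub-terminal state.

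The main obstacle is Steps 3 and 4: we must show that an empty track exists whenever a new departure track is needed. The argument I would try first is a count. At any moment, the departure tracks already assigned number at most $|D_z|-1$ when one more is needed. Every remaining track is either empty, a full non-departure track, or the single partial track. The railcars of the destinations not yet assigned all sit on non-departure tracks. Since each destination holds at most $H$ cars, these cars fill fewer non-departure tracks than there are destinations. Comparing this with $|K_z|\ge |D|\ge|D_z|$, the pigeonhole principle should force some track to be empty.

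If this count comes out one short, I would refine Step 3 in the usual way. The idea is to first move the switch-end group onto a departure track or onto a free slot of another non-departure track, which is possible because the zone's cars do not saturate all $|K_z|$ tracks under the standing yard assumptions. Doing so frees a track before it is designated. Once an empty track is secured at each such moment, the remaining verification is routine, and it shows that the RAP-subproblem admits a finite feasible move sequence.
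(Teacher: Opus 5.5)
\textbf{There is a genuine gap, at exactly the step you flagged, and it cannot be repaired.}

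Your pigeonhole count fails. Before Step 5, a full non-departure track may hold cars of destinations that \emph{already} have departure tracks, so the number of full non-departure tracks is not bounded by the unassigned destinations. Your fallback claim also fails: assumptions (iv)--(v) do not stop the zone's cars from saturating all $|K_z|$ tracks. With $|K_z|=|D|$ and every $n_d=H$, every slot is occupied. Free slots would not rescue the argument either.

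Here is a concrete counterexample. Take $H=2$, tracks $\{0,1\}$ and $D=\{A,B\}$, listing cars from dead end to switch end. Track $0$ holds $(A,B)$ and track $1$ holds $(A)$. All hypotheses hold: $n_A=2\le H$, $n_B=1$ and $|K|=|D|=2$.
\begin{itemize}
\item Your construction stalls. Step 1 makes track $1$ the departure track of $A$, track $0$ is full, and Step 4 finds no empty track for $B$.
\item The instance itself is unsolvable. The only legal move sends $B$ from track $0$ to track $1$. In the resulting state, track $0$ holds $(A)$ and track $1$ holds $(A,B)$, and the only legal move sends $B$ back. Neither state is terminal.
\end{itemize}
So if ``feasible'' means a sub-terminal state is \emph{reachable by legal moves}, the statement is false. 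No refinement of Step 3 can save the construction.

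Algorithm~\ref{alg:Tmax_heuristic} can also stall on solvable instances. For example, take $H=3$ with tracks $(A)$, $(B)$, $(A,A,C)$, $(B,B,C)$. This instance is solvable in five moves, yet Step 4 finds no empty track for $C$. So running that heuristic is not a valid feasibility certificate.

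The paper's proof is only your first counting fact. A zone contains at most $|D|\le|K_z|$ destinations, so each can be given its own track, and assumption (iv) makes that configuration respect capacity. This shows that a sub-terminal configuration \emph{exists}. That is the sense of ``feasible'' in the sentence just before the proposition, where infeasibility means some destination is ``unable to obtain an exclusive track''. The paper's argument says nothing about reaching that configuration.

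You have two options:
\begin{itemize}
\item Adopt the existence reading. Then your first paragraph is already the whole proof, and the constructive part should be dropped.
\item Keep the reachability claim. Then you need additional hypotheses beyond $|K_z|\ge|D|$, such as a guaranteed empty track or sufficient free capacity.
\end{itemize}
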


\begin{proof}
Each zone contains at least $|D|$ tracks, while the number of destinations in a zone is at most $|D|$. Therefore, one distinct track can be assigned to each destination within the zone. Hence, the corresponding RAP-subproblem is feasible.
\end{proof}

To ensure that RAP-subproblems remains feasible in each zone, the yard decomposition strategy, including the number of zones and the number of tracks assigned to each zone, is critical. 

Let $\mathcal{F}$ denote the number of zones. Its value is determined by
\[
\mathcal{F} = \left\lfloor \frac{|K|}{|D|} \right\rfloor,
\]
where $\lfloor \cdot \rfloor$ denotes the floor function. Thus, $\mathcal{F}$ equal to the largest integer less than or equal to $\frac{|K|}{|D|}$. Considering the assumption (v) illustrated in Section~\ref{sec:mip}, the value of $\mathcal{F}$ should be greater than or equal to 1. The Yard-Zone decomposition algorithm is illustrated in Algorithm~\ref{alg:zone_decomposition}.

\begin{algorithm}[H]
\caption{Yard-Zone Decomposition}
\label{alg:zone_decomposition}
\begin{algorithmic}[1]

\State Compute
\[
\mathcal{F} \gets \left\lfloor \frac{|K|}{|D|} \right\rfloor
\]

\If{$\mathcal{F}=1$}
    \State Return one zone containing all tracks in $K$
\EndIf

\If{$\mathcal{F}>1$ and $\mathcal{F}=\frac{|K|}{|D|}$}
    \State Divide the yard into $\mathcal{F}$ consecutive zones
    \State Assign exactly $|D|$ consecutive tracks to each zone
\EndIf

\If{$\mathcal{F}>1$ and $\mathcal{F}<\frac{|K|}{|D|}$}
    \State Divide the yard into $\mathcal{F}$ consecutive zones
    \State Assign exactly $|D|$ consecutive tracks to each of the first $\mathcal{F}-1$ zones
    \State Assign all remaining tracks to the last zone
\EndIf

\end{algorithmic}
\end{algorithm}

When $\mathcal{F}=1$, all tracks in $K$ are treated as a single zone. Recall Assumption (v) of the RAP, which states that \(|D| \leq |K|\). Therefore, according to Proposition~\ref{prop:rap_sub_feasible}, the corresponding RAP-sub is feasible.

When $\mathcal{F}>1$ and the number of tracks in the yard can be evenly divided by the number of destinations, i.e., \(\mathcal{F}=\frac{|K|}{|D|}\), the yard is partitioned into $\mathcal{F}$ consecutive zones, where each zone contains exactly $|D|$ consecutive tracks. Specifically, given \(K=\{0,1,2,\dots,|K|-1\}\), tracks \(\{0,1,2,\dots,|D|-1\}\) are assigned to Zone 1. The next $|D|$ consecutive tracks \(\{|D|,|D|+1,|D|+2,\dots,2|D|-1\}\) are assigned to Zone 2, and so on. Therefore, all zones have identical sizes. When $\mathcal{F}>1$ but the yard cannot be evenly divided by the number of destinations, i.e.,
\(\mathcal{F}<\frac{|K|}{|D|}\), the first $\mathcal{F}-1$ zones are assigned exactly $|D|$ consecutive tracks, while the remaining tracks are assigned to the final zone. Consequently, the final zone may contain more than $|D|$ tracks. Under both cases above, every zone contains at least $|D|$ tracks. Therefore, according to Proposition~\ref{prop:rap_sub_feasible}, the RAP-subproblem in each zone is feasible.

\newpage
\subsection{DDQN}
\label{sec:DDQN_PHASE}
In this section, the basic principles of the DDQN for the RAP are presented. Before describing the DDQN formulation for the RAP, we first briefly introduce the background of DDQN. 

The popular Q-learning \parencite{watkins1992q} methods have been designed to solve sequential decision-making problems. The general framework of Q-learning is based on an MDP involving a dynamic sequence of decisions.  An MDP typically contains a number of possible states, and with each state is associated a potential actions.  Selecting an action in a state produces a reward and results in a transition to another state.  The quality of an action in a given state is given by the action value function \(Q(s,a)\). Actions are typically guided by a decision policy $\pi$, which determines an action $a_t$ when in state $s_t$, i.e.,
\[
Q_{\pi}(s,a) \;=\; \ \mathbb{E} \!\left[\sum_{t\ge 0}\gamma^t\, r(s_t,a_t,s_{t+1})
\ \Bigm|\ s_0=s,\ a_0=a\right],
\]
where \(r(s_t,a_t,s_{t+1})\) represents the immediate reward associated with transferring from state \(s_t\) to \(s_{t+1}\), and \(\gamma \in [0, 1)\) denotes a discounting factor. The optimal value of \(Q(s,a)\), denoted as \(Q^*(s,a)\), corresponds to the maximum value of \(Q(s,a)\) over all policies, i.e., \(Q^*(s,a)=\max_{\pi} Q_{\pi}(s,a)\). Classical Q-learning methods use lookup tables to store and update these Q-values for every state-action pair, which is not suitable for large-scale problems. Instead of storing all possible Q-values in a table, Deep Q-Networks (DQN) use deep neural networks to approximate Q-values \parencite{mnih2015human}.  These neural networks use a trainable weight and bias vector $\theta$ to estimate Q-values using the function $Q(s,a;\theta)$.

\textcite{mnih2015human} also use the two important concepts of experience replay and the target network. Experience replay stores previous transitions \((s_t,a_t,r_t,s_{t+1})\) in a replay buffer. Random samples are taken from the replay buffer during the training phase to update parameter values and reduce the correlation among successive states.  The target network is a separate neural network from the main network, with parameters $\theta^{-}$ that are only updated periodically based on the values obtained in the main network to promote convergence. Otherwise, DQNs tend to overestimate Q-values when the same network is used to select and evaluate the quality of actions. To address this issue, \textcite{van2016deep} proposed DDQN, which separates the choice of action and the corresponding evaluation of the quality of the action. Specifically, the idea is to use the main network for choosing an action in a state and the target network evaluates the chosen action, thereby reducing overestimation bias and improving training stability. Motivated by these advantages, we employ DDQN to solve the RAP. We start by constructing the MDP formulation, and then illustrate the neural network architecture for the RAP. 

\subsubsection{MDP}
\label{sec:MDP_RAP}
In RAP settings, the locomotive can be considered as the RL agent responsible for selecting switching actions, while the environment corresponds to the entire railyard system, including the track and railcar information. We model the RAP as an MDP. At every decision point, the agent observes the current yard state, selects a feasible switching action, and transitions to a new state according to the environment dynamics, while receiving an immediate reward. Recall that \(K=\{0,1,2,\dots,|K|-1\}\). The details are as follows.\\

\noindent \paragraph{(1) State.} A state \(s_{t}\in\mathcal{S}\), where \(\mathcal{S}\) is the state space, represents railcars and track information at time $t$, which is represented by a tuple
\[
  s_t=\bigl(c_0, c_1,c_2,\dots,c_{|K|-1}\bigr),
\]
where for each track \(i\in K\), \(c_i\) contains an ordered list of railcars on
track \(i\) from the \emph{switch end} to the \emph{dead end} without gaps. Initial state \(s_0\) represents the given initial yard layout. A \emph{terminal} state is any configuration in which all railcars sharing the same destination are consolidated onto a
single track.\\

\noindent \paragraph{(2) Action.} An action \(a_{t}\in\mathcal{A}(s_{t})\), where \(\mathcal{A}\) denotes the action space, corresponds to a switching move characterized by
\[
a_t = (i,j,m),
\]
where \(i\in K\) is the source track, \(j\in K\setminus\{i\}\) is the receiver track, and
\(m\) represents the number of the moved railcars from $i$ to $j$. The ordering of the moved railcars remain unchanged.

To reduce the action space and avoid infeasible or ineffective switching operations, several action masks are imposed:

\begin{enumerate}
    \item An action that splits a group is not allowed.

    \item The receiver track must satisfy the track capacity constraint after the move.

    \item A track that already contains all railcars with a given destination is protected and cannot be selected as a source track.

    \item Moving all railcars from a source track to an empty receiver track is prohibited, since such operations only relocate the entire track configuration without improving the yard state.

    \item Immediate reverse moves are prohibited. Specifically, if the previous action moves \(m\) railcars from track \(i\) to track \(j\), then the reverse action moving the same \(m\) railcars from track \(j\) back to track \(i\) is not allowed in the next step.
\end{enumerate}

\noindent \paragraph{(3) State Transition.} Given a state \(s_{t}\), taking an action \(a_t\) deterministically transitions the system to the next state \(s_{t+1}\). The moved railcars are simply push into the receiver track. Consider the following state:
\[
s_t=\bigl([1,1,2,2],\ [2,3],\ []\bigr),
\]
where Track 0 contains railcars \([1,1,2,2]\), Track 1 contains railcars \([2,3]\), and Track 2 is empty. Suppose action
\(a_t=(0,1,2)\) is selected, which moves the leftmost two railcars \([1,1]\) from Track 0 to Track 1. The next state becomes
\[
s_{t+1}=\bigl([2,2],\ [1,1,2,3],\ []\bigr).
\]

\noindent \paragraph{(4) Reward Function.} Recall that objective of the RAP is to minimize the total number of switching moves required to reach a terminal state. To achieve this objective while encouraging the formation of a terminal state gradually throughout the switching process, we define a mixed reward function given by

\begin{equation}
r(s_t,a,s_{t+1})
=
-1
+
w_{\mathrm{deg}}r_{\mathrm{deg}}
+
w_{\mathrm{mono}}r_{\mathrm{mono}}
+
\begin{cases}
Bonus, & \text{if } s' \text{ is terminal},\\
0, & \text{o.w.},
\end{cases}
\label{eq:rap_reward}
\end{equation}
where the first term ($-1$) denotes the penalty associated with each switching move, and the last term \(B>0\) is a terminal state bonus.

The terms \(r_{\mathrm{deg}} \) and \(r_{\mathrm{mono}} \) correspond to binary variables, which represents a signal that captures a state change from $s_t$ to $s_{t+1}$ after taking an action $a$. Specifically, \(r_{\mathrm{deg}}\) indicates whether the total number of railcar groups decreases from state \(s_t\) to state \(s_{t+1}\). If the total number of groups decreases after executing action \(a\), then \(r_{\mathrm{deg}} =1\); otherwise, it is 0. Similarly, \(r_{\mathrm{mono}} =1\) if the total number of tracks containing all railcars of a single destination increases; otherwise, \(r_{\mathrm{mono}} = 0\). The terms \(w_{\mathrm{deg}}\) and \(w_{\mathrm{mono}}\) correspond to weight parameters.

\subsubsection{Neural Network Architecture}
\label{sec:neral_network}
The current yard state is used as input to the neural network, which predicts the Q-values corresponding to all feasible switching moves as the output. Based on the number of tracks \(|K|\) and track capacity \(H\), we construct a \(|K|\times H\) matrix \(V(s)\) to represent the yard layout, where each entry $v_{ij}(s)$ denotes the destination label of the railcar located in row \(i\) and column \(j\).
Empty slots are represented by a 0.

Railcars destinations are right-aligned so that the dead end always corresponds to the last column (\(h=H\)). Therefore, for a track containing \(h_k\) railcars, the switch-end railcar is located in column \(H-h_k+1\), while the dead-end railcar is located in column \(H\).
Considering the above example shown in the state transition with a track capacity of 4, we have 
\[
V(s)=
\begin{bmatrix}
1 & 1 & 2 & 2 \\
0 & 0 & 2 & 3 \\
0 & 0 & 0 & 0 
\end{bmatrix}.
\]

\noindent Each entry \(v_{ij}(s)\) is mapped to a \(p\)-dimensional embedding vector \(e(v_{ij}(s))\in \mathbb{R}^{p}\). Here, $p$ is the
embedding dimension, a tunable parameter that controls the size of the vector used to represent each entry of matrix \(V(s)\).  For each track \(i\in K\), let
\begin{equation}
z_i \in \mathbb{R}^{H\times p}
\label{eq:track_repres}
\end{equation}
denote the track-level embedding vector obtained by concatenating the embedding vectors of all \(H\) positions on track \(i\). 

The proposed neural network architecture consists of an encoder and a decoder. The encoder extracts latent structural representations from the embedded yard state, while the decoder maps the learned latent representations into Q-values for all enumerated actions.

The encoder first transforms \(z_i\) into an initial embedding representation \(h_i^{(0)}\) through a track-level multilayer perceptron (MLP):
\begin{equation}
h_i^{(0)} = \mathrm{MLP}_{\mathrm{track}}(z_i).
\label{eq:track_MLP_encoder}
\end{equation}
Then, all track embeddings are concatenated into a global yard representation:
\begin{equation}
h^{(0)}
=
[h_0^{(0)},h_1^{(0)},\dots,h_{|K|-1}^{(0)}].
\label{eq:STATE_MLP_encoder}
\end{equation}
The global yard representation is then processed by a state-level MLP, which provides estimated Q-values for enumerated actions:
\begin{equation}
Q(s,a;\theta)
=
\mathrm{MLP}_{\mathrm{state}}(h^{(0)}).
\label{eq:q_VALUE_DQN}
\end{equation}

During training, traditional \(\epsilon\)-greedy exploration is used to select feasible actions, i.e., with probability \(\epsilon\), we randomly choose an action from the feasible action set \(\mathcal{A}(s_t)\); otherwise, we choose the action with the largest current estimate of Q-value.  In particular, 

\begin{equation}
a_t=
\begin{cases}
\text{Uniform}(\mathcal{A}(s_t)), & \text{with probability } \epsilon,\\[2mm]
\arg\max\limits_{a\in \mathcal{A}(s_t)} Q(s_t,a;\theta), & \text{with probability } 1-\epsilon.
\end{cases}
\label{eq:epsilon_greedy}
\end{equation}

To stabilize the training process, two neural networks are maintained: the online (main) network parameterized by \(\theta\) and the target network parameterized by \(\theta^{-}\). The online network selects actions and updates  Q-functions, while the target network computes the Q-values. After every $N_{target}$ training steps, the target network parameters are copied using the parameters from the online network.

Following the DDQN framework \parencite{van2016deep}, the target Q-value for RAP is computed as
\begin{equation}
y_t
=
r(s_t,a_t,s_{t+1})
+
\gamma
(1-\mathrm{done}_{t+1})
Q\!\left(
s_{t+1},
\arg\max\limits_{a'\in\mathcal{A}(s_{t+1})}
Q(s_{t+1},a';\theta);
\theta^{-}
\right),
\label{eq:ddqn_target}
\end{equation}
where \(y_t\) denotes the DDQN target Q-value used for training at time step \(t\), and \(\mathrm{done}_{t+1}\) is a binary indicator variable that equals 1 if state \(s_{t+1}\) is terminal and 0 otherwise.

The loss function is calculated by 
\begin{equation}
L(\theta)
=
\mathbb{E}
\left[
\left(
y_t
-
Q(s_t,a_t;\theta)
\right)^2
\right].
\label{eq:ddqn_loss}
\end{equation}
The online network parameters are then updated by minimizing the loss function
\begin{equation}
\theta
\leftarrow
\theta
-
\alpha
\nabla_{\theta}L(\theta)
\label{eq:ddqn_theta_update}
\end{equation}
where the learning rate, $\alpha \in (0,1]$, determines the magnitude of the parameter updates.

The training procedure is presented in Algorithm~\ref{alg:DDQN_RAP}. During each episode, the DDQN chooses feasible switching movements according to Equation~\eqref{eq:epsilon_greedy}.  We maintain the corresponding transitions in the replay buffer and randomly sample mini-batches to train the online network. The online network is used to identify the next action, while the corresponding Q-value is estimated in the target network. As noted earlier, we periodically update the target network parameter values based on the online network parameters to improve training stability and convergence.  Training  terminates when either a terminal state is reached or the maximum number of steps \(t_{\max}\) is exceeded. Note that we assume at most one switching move can be executed during each time step. 

Upon training completion, switching decisions for the RAP are determined by the trained online network. Starting from $s_0$, at every step, we simply choose the switching move with the highest Q-value among all feasible actions:
\begin{equation}
a_t \in \arg\max_{a\in \mathcal{A}(s_t)} Q(s_t,a;\theta),
\label{eq:ddqn_greedy_evaluation}
\end{equation}
This procedure continues until the system enters a terminal configuration.

\begin{algorithm}[H]
\caption{DDQN for RAP}
\label{alg:DDQN_RAP}
\begin{algorithmic}[1]

\State Initialize replay buffer $\mathcal{B}$
\State Initialize $Q(s,a;\theta)$, $Q(s,a;\theta^-)$ with $\theta^-=\theta$
\State Initialize exploration rate $\epsilon$

\For{episode $=1,\dots,N_{\mathrm{episode}}$}

    \State Reset environment and obtain $s_0$

    \For{$t=0,\dots,t_{\max}$}

        \State Generate feasible action set $\mathcal{A}(s_t)$

        \If{$\mathcal{A}(s_t)=\emptyset$}
            \State break
        \EndIf

        \State Select action $a_t$ using Eq.~(\ref{eq:epsilon_greedy})

        \State Perform selected $a_t$

        \State Observe reward $r_t$, $s_{t+1}$, and terminal indicator $\mathrm{done}_{t+1}$

        \State Generate feasible action mask for $s_{t+1}$

        \State Store transition
        \[
        (s_t,a_t,r_t,s_{t+1},\mathrm{done}_{t+1})
        \]
        into replay buffer $\mathcal{B}$

        \If{replay buffer size exceeds threshold and \(\mathrm{global\_step} \bmod N_{\mathrm{train}} = 0\)}

            \State Randomly sample a mini-batch from $\mathcal{B}$

            \State Calculate DDQN target value using Eq.~(\ref{eq:ddqn_target})

            \State Update online network parameters $\theta$ by minimizing Eq.~(\ref{eq:ddqn_loss})

        \EndIf

        \If{\(\mathrm{global\_step} \bmod N_{\mathrm{target}} = 0\)}
            \State Update target network:
            \[
            \theta^- \leftarrow \theta
            \]

        \EndIf

        \If{$\mathrm{done}_{t+1}=1$}

            \State break

        \EndIf

    \EndFor

    \State Update exploration rate $\epsilon$:
  \(
    \varepsilon \gets \max\left(\varepsilon_{\min},\, \varepsilon \cdot \varepsilon_{\mathrm{decay}}\right).
  \)

\EndFor

\end{algorithmic}
\end{algorithm}

\newpage
\subsection{Zone-DDQN Workflow}
\label{sec:zone_DDQN_workflow}
Figure~\ref{rapyard} illustrates the overall workflow of the proposed Zone-DDQN framework for the RAP. Depending on the problem size, different solution strategies are adopted.

For small-scale RAP instances, the DDQN framework is directly applied to the entire yard system to generate switching decisions. 

For medium- and large-scale RAP instances, directly applying DDQN to the entire yard becomes computationally challenging because the number of possible states and actions increases rapidly. To improve scalability, the preprocessing merging heuristic described in Section~\ref{sec:Merging} is first applied to repeatedly merge the switch-end groups sharing the same destination. The resulting yard state is then decomposed into multiple yard zones according to the proposed yard-zone decomposition strategy presented in Section~\ref{sec:zone_decompose}.

Each zone is independently solved using the DDQN framework illustrated in Section~\ref{sec:DDQN_PHASE}. Specifically, DDQN training is performed within each zone to learn switching policies for the corresponding subproblem. After training, the learned online network is used to greedily generate switching actions for the zone until a sub-terminal state is reached, which means each track within a zone contains at most one single destination label.  

After all zones are sequentially processed, a final merging heuristic is applied. This additional merging step is necessary because railcars with the same destination may still appear across different zones. Thus, we further apply merging operations to ensure all railcars sharing the same destination are consolidated onto a single track. 

The entire process transforms the initial yard state into a terminal yard state. The overall number of switching moves generated during the Zone-DDQN framework is used as the objective function value of the RAP solution.

\section{Computational Results}
\label{sec:c_results}
Computational results are presented in this section. The proposed MIP model and Zone-DDQN algorithm are implemented and compared in terms of solution quality and computational performance. All experiments are conducted on a laptop equipped with an Apple M3 Pro chip and 18 GB of memory. The MIP model is implemented in Python 3.12.11 and solved using Gurobi Optimizer version 12.0.3.

Considering the number of tracks, track capacity, and the number of railcar destinations, we define three yard-size scales for computational experiments, as shown in Table~\ref{tab:scale_tracks_capacity}, following the general flat yard-scale description in \textcite{wong1981railroad}.

\begin{table}[ht]
\centering
\caption{Problem scales by yard size}
\label{tab:scale_tracks_capacity}
\begin{tabular}{lccc}
\toprule
 & \textbf{Small} & \textbf{Medium} & \textbf{Large} \\
\midrule
\textbf{Number of tracks ($|K|$)} & 5 &  15 & 30 \\
\textbf{Capacity of each track ($H$)} &30  & 40 & 60 \\
\textbf{Number of destinations ($|D|$)} &3  & 6 & 9 \\
\bottomrule
\end{tabular}
\end{table}

An overall set of 30 problem instances are generated, including 10 instances for every yard-size scale. 
Within each generated instance, the number of railcars associated with destination \(d\in D\) is randomly generated according to \(n_d \sim \mathrm{Uniform}(1,H)\). Then, all railcars are randomly distributed across tracks.

Table~\ref{tab:ddqn_parameters_scale} presents the DDQN parameters used at different scales. The medium-scale and large-scale instances share the same parameter settings. This is because both instance categories have relatively large state and action spaces compared with the small-scale instances. Across all problem scales, we set the weights \(w_{\mathrm{deg}} = 4\) and \(w_{\mathrm{mono}}=3\) in the MDP reward function, the discount factor \(\gamma=0.99\), the initial exploration rate \(\epsilon=1.0\), and the embedding dimension \(p=32\).

\begin{table}[ht]
\centering
\caption{Scale-dependent DDQN parameter settings}
\label{tab:ddqn_parameters_scale}
\begin{tabular}{lcc}
\toprule
\textbf{Parameter} & \textbf{Small} & \textbf{Medium and Large} \\
\midrule
Number of training episodes (\(N_{\mathrm{episode}}\)) & 1000 & 2000  \\
Bonus for terminal & 15 & 100  \\
Maximum steps per episode (\(t_{\max}\)) & 40 & 200  \\
Replay buffer capacity & 100,000 & 150,000 \\
Mini-batch size & 128 & 64  \\
Learning rate (\(\alpha\))& \(1\times10^{-3}\) & \(1\times10^{-4}\)  \\
Target network update frequency (\(N_{\mathrm{target}}\)) & 500 & 1000  \\
Minimum exploration rate (\(\epsilon_{\min}\)) & 0.02 & 0.05 \\
Exploration decay rate (\(\epsilon_{\mathrm{decay}}\)) & 0.995 & 0.998 \\
\bottomrule
\end{tabular}
\end{table}

To compare the performance of the MIP model and the proposed Zone-DDQN algorithm, the following evaluation metrics are used. The objective value (Obj.) represents the overall number of switching movements needed to reach a terminal yard state. Computational time (Time) is reported in seconds. The time ratio for the MIP model is calculated as \(\text{Time Ratio}= \frac{\text{MIP Time}}{\text{Zone-DDQN Time}}\). In addition, the optimality gap is computed as
\(\text{Optimality Gap (\%)} = \frac{\text{Zone-DDQN Obj.}-\text{MIP Obj.}}{\text{MIP Obj.}}
\times 100\%\).

\begin{table}[htbp]
    \centering 
    \footnotesize
    \caption{MIP and Zone-DDQN algorithm results}
    \label{tab:3rd_results}
    \begin{tabular}{lcccccc}
        \toprule
        & \multicolumn{3}{c}{MIP} & \multicolumn{3}{c}{Zone-DDQN} \\
        \cmidrule(lr){2-4} \cmidrule(lr){5-7}
        Instances 
        & Obj. & Time (s) & Time Ratio & Obj. &  Time (s) & Optimality Gap (\%) \\
        \midrule
        
        Small (7 cases) & 4.42 & 6693.91 & 51.36 & 4.71 & 93.47 & 5.71 \\
        
        Small (3 cases) & - & - & - & 16 & 345.87 & - \\
        
        Medium (10 cases) & - & - & - & 67 & 177.19 & - \\
        
        Large (10 cases) & - & - & - & 80.2 & 214.42 & - \\
        
        \midrule
        \textbf{Overall Average} & 4.42 & 6693.91 & 51.36 & 41.98 & 207.74 & 5.71 \\
        \bottomrule
        \multicolumn{7}{l}{\footnotesize \textit{Note:} “-” indicates MIP cannot obtain results within the 24-hour time limit.}
        
    \end{tabular}
\end{table}

By setting a running time limit of 24 hours, 7 instances of small instances are solvable. The remaining 3 small-scale instances, as well as all medium- and large-scale instances, cannot be solved by the MIP model. However, Zone-DDQN can generate solutions for all instances. The average running time for MIP across 7 cases is significantly large, which is 6693.91 seconds as shown in Table~\ref{tab:3rd_results}. Instead, Zone-DDQN only requires \(93.47\) seconds. 

By setting a running time limit of 24 hours, 7 small-scale instances are solvable by the MIP model. The remaining 3 small-scale instances, as well as all medium- and large-scale instances, cannot be solved by the MIP model within the prescribed time limit. However, the Zone-DDQN heuristic can generate solutions for all instances. The average running time of the the MIP model across the 7 solvable cases is significantly large, reaching \(6693.91\) seconds as shown in Table~\ref{tab:3rd_results}, which is \(51.36\) times larger than that of Zone-DDQN. In contrast, Zone-DDQN only requires an average of \(93.47\) seconds while maintaining an average optimality gap of \(5.714\%\). The relatively low optimality gap demonstrates the solution quality of Zone-DDQN. The overall average running time across all instances for Zone-DDQN is 207.74 seconds, which indicates the efficiency and scalability of Zone-DDQN. Therefore, as the problem scale increases, the proposed Zone-DDQN algorithm exhibits significantly better scalability and computational performance than the MIP model.

\section{Conclusion and Future Research}
\label{sec:conclusion}
In this paper, we study the Railcar Assignment Problem (RAP) in flat switching yards and formulate the problem as a sequential railcar switching optimization problem. An MIP model is first developed to minimize the total number of switching movements required to transform an initial yard state into a terminal yard state. We further establish the NP-hardness of the RAP by reducing the water sorting problem to a restricted version of RAP. Given this complexity, a Zone-DDQN framework is developed by decomposing the yard into multiple smaller yard zones and solving each zone independently using DDQN. The proposed framework integrates preprocessing merging heuristics, yard-zone decomposition, DDQN-based switching policy learning, and final merging operations. The preprocessing merging heuristic helps reduce the total number of groups in the railyard by merging switch-end groups that share the same destination. After yard decomposition, DDQN is independently implemented within each zone to generate a sub-terminal state. Finally, the merging heuristic is applied again to further combine railcars with the same destination across different zones into a single track.

Computational experiments demonstrate the effectiveness and scalability of the proposed Zone-DDQN framework across 30 generated RAP instances ranging from small- to large-scale yards. The overall average running time of Zone-DDQN is \(207.74\) seconds. Although the MIP model is able to solve several small-scale instances, the average running time is significantly larger, reaching \(6693.91\) seconds. Furthermore, the average optimality gap of Zone-DDQN is only \(5.71\%\), indicating that the proposed framework can achieve competitive solution quality with substantially better computational efficiency.

Future research may extend the RAP to more complex train configurations where railcars can be stacked in two vertical layers, referred to as double-stack trains.  In such settings, additional operational constraints need to be considered to ensure safety and stability. For example, heavier railcars are generally required to be placed in the lower layer, while lighter railcars are placed in the upper layer. Another direction is the consideration of switching operations for double-stack trains in different types of rail yards. This paper focuses on stub flat yards, where switching operations are performed from only one side of each track. Future studies may consider through yards, where switching operations can be conducted from both ends of the track, resulting in different operational characteristics and switching strategies.

\printbibliography

\newpage
\noindent {\Large \textbf{Appendix}}
\begin{appendices}
\section{Proofs}\label{sec:proofs_appen}

\begin{proof}[\unskip\nopunct] \noindent \textbf{Proof of Lemma \ref{lem:slotChange}.} 
Consider a railcar that is among the $m_{ijt}$ railcars moved from track $i$ to track $j$ in period $t$. Since exactly $m_{ijt}$ railcars leave track $i$, these railcars must be the $m_{ijt}$ railcars closest to the switch end of track $i$ due to the one-sided access property of the track. Let $p_{rt}$ indicate the slot position for railcar $r$ in period $t$. In particular, these removed railcars should have a slot index greater than \( h_{it}-m_{ijt}\), i.e., \( p_{r,{t-1}} > h_{it}-m_{ijt}\).

After the move, the railcar is placed on track $j$. Since track $j$ contains $h_{jt}$ railcars before receiving the moved railcars, the new slot position of the railcar on track $j$ is \( p_{rt} = h_{jt} + p_{t-1} - \left(h_{it}-m_{ijt}\right) \).

Thus, the slot change is
\(p_{rt}-p_{r,{t-1}}
=
h_{jt}-h_{it}+m_{ijt}.\)

Therefore, the slot change for each moved railcar is
\[
q=h_{jt}-h_{it}+m_{ijt}.
\]

Because each track has capacity $H$, a feasible move satisfies
\[
1\leq m_{ijt}\leq h_{it}, 
\qquad 
h_{jt}+m_{ijt}\leq H.
\]
Thus, the smallest possible value of $q$ is obtained when $h_{jt}=0$, $h_{it}=H$, and $m_{ijt}=1$, which gives
\[
q_{\min}=-(H-1).
\]
The largest possible value is obtained when $h_{jt}=H-1$, $h_{it}=1$, and $m_{ijt}=1$, which gives
\[
q_{\max}=H-1.
\]
Therefore,
\[
q\in\{-(H-1),\ldots,H-1\}=\Delta.
\]
This completes the proof.
\end{proof}

\end{appendices}
\end{document}